\documentclass[12pt]{amsart}

\usepackage{amsthm,amssymb,xcolor,graphicx,caption}
\usepackage[hidelinks]{hyperref}
\usepackage[top=1.2in,left=1in,right=1in,bottom=1.5in]{geometry}
\usepackage{fancyhdr}

\theoremstyle{plain}
\newtheorem{theorem}{Theorem}

\begin{document}

\title{ON A SQUARE PACKING CONJECTURE OF ERD\H{O}S}
\author{Anshul Raj Singh}
\date{\today}
\maketitle
\vspace{-1cm}
\begin{abstract}
    \noindent Let $f(n)$ be the maximum sum of the sides of non-overlapping squares (or equilateral triangles) packed inside a unit square or (unit equilateral triangle). In this paper, we explore some properties of $f$ and examine how the square and triangle cases are similar. We prove that a conjecture of Erd\H{o}s, which says that $f(k^2+1) = k$ for all $k$, is equivalent to the convergence of the series $\sum_{k\geqslant 1}(f(k^2+1)-k)$. We also explore the case of parallelograms and discuss how that is similar to the case of unit square and triangle.
\end{abstract}
    
\section{Introduction}Let $f(n)$ denote the maximum sum of the side lengths of $n$ non-overlapping squares packed inside a unit square. A simple application of Cauchy-Schwarz inequality implies that $f(n^2) = n$ for all $n$. Erd\H{o}s \cite{Er94} conjectured that $f(n^2+1) = n$ for all positive integers $n$, but this still remains open.

In this paper we will first look at this conjecture for the case of unit equilateral triangle (unit side length) which turns out to be very similar to the case of unit square. With a slightly modified definition of $f$ we will also see that the same properties hold for the case of parallelogram.

\section{Main results}
\begin{theorem}
    Let $f:\mathbb{N}\rightarrow\mathbb{R}$ such that for all $a\leqslant b$, we have $$af(m)\leqslant a^2 - b^2 + bf(b^2 - a^2 + m)\text{ and } f(m^2+1)\geqslant m\quad (\ast)$$for any positive integer $m$. Let $\epsilon(k) := f(k^2+1) - k$, then
    \begin{enumerate}
        \item If $\epsilon(n) = 0$ for some $n$, then $\epsilon(k)$ for all $k\leqslant n$.
        \item If $\epsilon(n) > 0$ for some $n$, then $\epsilon(k) = \Omega(1/k)$, i.e., there exists a positive constant $c$ and an integer $k_0$ such that $\epsilon(k)\geqslant c/k$ for all $k\geqslant k_0$.
    \end{enumerate}
\end{theorem}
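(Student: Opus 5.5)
The whole argument comes from a single specialization of $(\ast)$: take $m=a^2+1$. Then $b^2-a^2+m=b^2+1$, so for all positive integers $a\leqslant b$ the first inequality in $(\ast)$ reads
$$a\,f(a^2+1)\leqslant a^2-b^2+b\,f(b^2+1).$$
Substituting $f(a^2+1)=a+\epsilon(a)$ and $f(b^2+1)=b+\epsilon(b)$ gives
$$a^2+a\epsilon(a)\leqslant a^2-b^2+b^2+b\epsilon(b),$$
which simplifies to
$$a\,\epsilon(a)\leqslant b\,\epsilon(b)\qquad (a\leqslant b).$$
So the key monotonicity fact is that $g(k):=k\,\epsilon(k)$ is nondecreasing in $k$. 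In addition, the second condition in $(\ast)$, $f(m^2+1)\geqslant m$, says exactly that $\epsilon(k)\geqslant 0$ for every $k$.

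For part (1), I read the intended conclusion as $\epsilon(k)=0$ for all $k\leqslant n$. Suppose $\epsilon(n)=0$ and let $k\leqslant n$. Monotonicity of $g$ gives $k\epsilon(k)\leqslant n\epsilon(n)=0$. Combined with $\epsilon(k)\geqslant 0$, this forces $\epsilon(k)=0$.

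For part (2), suppose $\epsilon(n)>0$ and put $c:=n\epsilon(n)>0$ and $k_0:=n$. For every $k\geqslant k_0$, monotonicity gives $k\epsilon(k)\geqslant n\epsilon(n)=c$, that is, $\epsilon(k)\geqslant c/k$. This is exactly $\epsilon(k)=\Omega(1/k)$.

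I do not expect a real obstacle. The only step needing care is spotting the specialization $m=a^2+1$, which turns the hypothesis into a comparison between $\epsilon(a)$ and $\epsilon(b)$. After that, both parts are one-line consequences of $g$ being nondecreasing together with $\epsilon\geqslant 0$.
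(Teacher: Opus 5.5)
Your proof is correct and is essentially the paper's argument: both specialize $(\ast)$ at $m=a^2+1$ to get $a\epsilon(a)\leqslant b\epsilon(b)$ for $a\leqslant b$. Part (1) then follows using $\epsilon\geqslant 0$, and part (2) follows with $c=n\epsilon(n)$ and $k_0=n$. Phrasing this as monotonicity of $k\epsilon(k)$ is just a cleaner packaging of the same computation.
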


\begin{proof}
    If $n$ is such that $\epsilon(n) = 0$, we want to show that $\epsilon(k) = 0$ for all $k\leqslant n$. Let $n\geqslant k$ be such that $\epsilon(n) = 0$. Set $m = k^2+1, a = k$, and $b = n$. Putting these values in $(\ast)$, we get
    $$kf(k^2+1)\leqslant k^2-n^2+nf(n^2-k^2+k^2+1)\Rightarrow kf(k^2+1)\leqslant k^2.$$
    Hence $f(k^2+1) = k$ for all $k\leqslant n$. This proves $(1)$.

    Now let $n$ be such that $f(n^2+1) = n+\alpha$ with $\alpha > 0$. Let $a = n$, $m = n^2+1$ and $b\geqslant n$, therefore, by $(\ast)$ we get
    $$nf(n^2+1) = n^2 + n\alpha \leqslant n^2 - b^2 + bf(b^2 + 1) = n^2 + b\epsilon(b).$$
    Therefore, $\epsilon(b)\geqslant n\alpha/b$ for all $b\geqslant n$. This proves $(2)$.
\end{proof}

\section{The Case of triangle}
Let $a_1, a_2, \ldots, a_n$ be the side lengths of non-pverlapping equilateral triangles packed inside a unit equilateral triangle (side length $1$). Let $f(n) = \max\{a_1+\cdots+a_n\}$ over all the possible packings of these $n$ triangles. Note that by comparing the areas, we get $\sqrt{3}(a_1^2+\cdots+a_n^2)/4\leqslant \sqrt{3}/4\implies a_1^2+\cdots+a_n^2\leqslant 1$. On the other hand, by the Cauchy-Schwarz inequality, we have
$$n \geqslant (a_1^2+\cdots+a_n^2)(n) \geqslant (a_1+\cdots a_n)^2 \implies f(n) \leqslant \sqrt{n}.$$
Since an equilateral triangle can always be packed with a square number of congruent equilateral triangles, we obtain that $f(k^2) = k$ for all positive integers $k$. A natural question, similar to Erd\H{o}s' conjecture, arises: Can we say that $f(k^2+1) = k$ for all $k$? 

\begin{theorem}
    $f(k^2+1) = k$ for all $k$ if and only if $\sum_{k\geqslant 1}\epsilon(k)$ converges.
\end{theorem}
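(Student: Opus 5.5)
The plan is to deduce the statement from Theorem~1, applied to the triangle version of $f$. Once the hypotheses $(\ast)$ are verified for this $f$, both directions are short. If $f(k^2+1)=k$ for all $k$, then $\epsilon(k)=0$ for every $k$ and the series trivially converges. Conversely, suppose $\sum_k \epsilon(k)$ converges. Since $\epsilon(k)\geqslant 0$ (this is the second half of $(\ast)$), if some $\epsilon(n)>0$, part~(2) of Theorem~1 gives $\epsilon(k)\geqslant c/k$ for all $k\geqslant k_0$. Comparison with the harmonic series then forces divergence, a contradiction. Hence $\epsilon(n)=0$ for every $n$, i.e.\ $f(n^2+1)=n$ for all $n$. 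Part~(1) is not even needed; the whole argument rests on part~(2).

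So the real work is checking $(\ast)$ for triangles, and I would do it constructively.

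\emph{Lower bound $f(m^2+1)\geqslant m$.} Cut the unit triangle into $m^2$ congruent triangles of side $1/m$. Then subdivide one of them into four triangles of side $1/(2m)$. This gives $m^2+3$ triangles with total side $m-1/m+4/(2m)=m+1/m$, which is more than we need. To get exactly $m^2+1$ pieces, I would instead first show that $f$ is nondecreasing: given any packing with fewer pieces, add a tiny triangle in leftover space, or, if the packing is perfect, subdivide one piece into four, which increases the sum. Monotonicity together with $f(m^2)=m$ then gives $f(m^2+1)\geqslant f(m^2)=m$. Since only $m^2+1>m^2$ is used, a single extra tiny triangle in the $m^2$-tiling already suffices, provided we allow degenerate (size-zero) or arbitrarily small triangles. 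Otherwise we split one small triangle into four, giving $m^2+3$, and use monotonicity. I will adopt the convention that $f$ is a supremum, so adding a triangle of arbitrarily small size is allowed.

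\emph{The inequality $af(m)\leqslant a^2-b^2+bf(b^2-a^2+m)$ for $a\leqslant b$.} This is the standard scaling construction. Take the unit triangle, view it at scale $b$, and tile it by $b^2$ unit triangles. The top corner region, a triangle of side $a$, consists of $a^2$ of these unit triangles. Replace that corner region by a scaled copy of an optimal (or near-optimal) $m$-packing of side $a$; its side sum is $af(m)$. Keep the remaining $b^2-a^2$ unit triangles, each of side $1$. Scaling everything down by $1/b$ gives a packing of the unit triangle with $b^2-a^2+m$ pieces and side sum $\bigl(b^2-a^2+af(m)\bigr)/b$. Hence
\[
bf(b^2-a^2+m)\geqslant b^2-a^2+af(m),
\]
which rearranges to the stated inequality. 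The key geometric fact is that the complement of a corner subtriangle of side $a$ inside a triangle of side $b$ (with $a,b$ integers) is exactly a union of $b^2-a^2$ unit triangles of the grid, which is clear from the triangular lattice.

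The main obstacle I expect is purely definitional: handling whether $f$ is attained, and whether tiny or extra pieces are allowed, so that the lower bound $f(m^2+1)\geqslant m$ and the approximation by near-optimal packings are rigorous. I would resolve this by working with suprema and an $\varepsilon$-approximation argument, letting $\varepsilon\to 0$ at the end.
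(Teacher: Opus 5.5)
Your proposal is correct and follows the paper's route. You verify $(\ast)$ for triangles by replacing a corner side-$a$ subtriangle of the $b\times b$ grid with a scaled optimal $m$-packing, and then conclude via part~(2) of Theorem~1 and comparison with the harmonic series. The one loose spot is the lower bound $f(m^2+1)\geqslant m$: subdividing a piece into four adds three pieces rather than one, and a perfect tiling has no room for an extra tiny triangle unless you first shrink something. It is cleaner to replace one grid triangle of side $1/m$ by two triangles of side $1/(2m)$ sitting in two of its corners, which gives $m^2+1$ pieces with side sum exactly $m$ (the paper just points to an explicit example in Fig.~1).
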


\noindent First, we claim that $f$ satisfies $(\ast)$. Note that $f(k^2+1)\geqslant k$, see Fig.1 for an example.

\begin{center}
    \includegraphics[width=50mm]{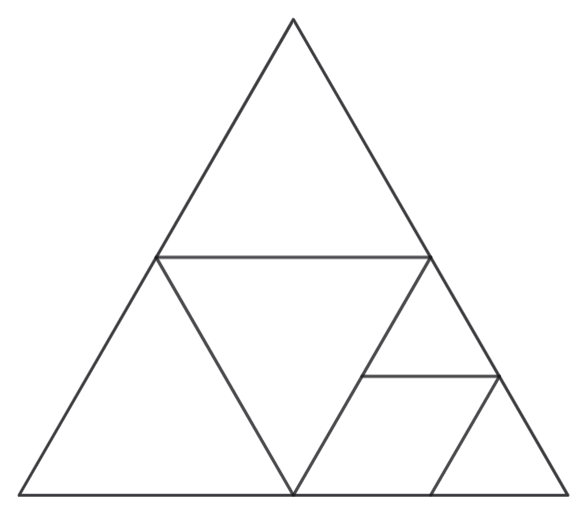}\\
    Fig.1, example: $n=2$, side sum $=2$
\end{center}

\noindent Now to prove the other inequality, we use a similar argument as Iwan Praton \cite{Pr08}.

Take a unit equilateral triangle and divide it into $b\times b$ standard triangular grid, see Fig.2, each triangle having side length $1/b$. Remove, from the bottom right corner, an $a\times a$ subgrid and replace it with $n$ non-overlapping triangles in optimal configuration, i.e., maximizing the side sum. We now have $n$ squares shrunk by a factor of $a/b$ thus having side sum $af(n)/b$ and $b^2-a^2$ squares of side length $1/b$. Therefore, the total sum of sides of triangles is $(b^2-a^2)/b + af(n)/b$. Also, we have $b^2-a^2+n$ triangles in total. Thus, by the definition of $f$, we obtain $$b-\frac{a^2}{b} + a\frac{f(n)}{b}\leqslant f(b^2-a^2+n)\implies af(n)\leqslant a^2 - b^2 + bf(b^2-a^2+n).$$This shows that $f$ satisfies $(\ast)$.
\vspace{0.3cm}

\begin{center}
    \includegraphics[width=50mm]{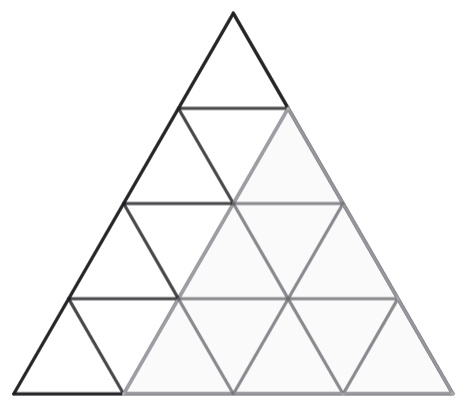}\\
    Fig.2, example: $b = 4$, $a = 3$
\end{center}
\vspace{0.3cm}

Note that this argument works for any shape that can be tiled with a square number of congruent shapes similar to it.

Now by \textit{Theorem 1}, we obtain that if $f(n^2+1) = n$ for some $n$ then $f(k^2+1) = k$ for all $k\leqslant n$. In particular, if $f(k^2+1) = k$ for infinitely many $k$ then $f(k^2+1) = k$ for all $k$. Now we prove \textit{Theorem 2}.

\begin{proof}[Proof of Theorem 2]
    The forward direction is trivial. For the converse part, assume that the series converges, then by part $(2)$ of \textit{Theorem 1}, we cannot have an $n$ such that $\epsilon(n) > 0$ otherwise $\epsilon(k) = \Omega(1/k)\implies \sum_{k\geqslant 1}\epsilon(k)$ diverges.
\end{proof}

\section{The Case of Unit Square (Erd\H{o}s)}
Let $a_1, a_2, \ldots, a_n$ be the side lengths of non-overlapping squares packed inside a unit square. Let $f(n) = \max\{a_1+\cdots+a_n\}$. Then $f$ satisfies $(\ast)$. This is also proven by the same argument as in the case of the equilateral triangle.

Therefore, we obtain the same result, i.e., Erd\H{o}s' conjecture holds if and only if the sum $\sum_{k\geqslant 1}\epsilon(k)$ converges. Additionally, we have that if $f(k^2+1) = k$ for infinitely many $k$, then the conjecture is true.

\section{The Case of parallelograms}
It is clear that we have to modify $f$ for this case. Without loss of generality, let one side of the parallelogram be $1$ and the other be $x$. Let the sides of the similar parallelograms covering this one be $a_i$ and $a_ix$ for $i = 1, 2, \ldots, n$. Then, $f(n) := \max\{a_1+\cdots+a_n\}$. It is easy to see that since this parallelogram can be tiled with a square number of congruent parallelograms similar to it, this $f$ satisfies $(\ast)$. Therefore, the results from the previous case hold for this case as well.

\end{document}